\tikzstyle{B}=[draw,circle, fill=black, minimum size=5pt,inner sep=0pt]
\tikzstyle{W}=[draw,circle, fill=white, minimum size=8pt,inner sep=0pt]
\tikzstyle{R}=[draw,circle, fill=red, minimum size=8pt,inner sep=0pt]
\tikzstyle{A}=[draw,circle, fill=blue, minimum size=8pt,inner sep=0pt]
\tikzstyle{G}=[draw,circle, fill=green, minimum size=8pt,inner sep=0pt]
\tikzstyle{Z}=[draw,circle, fill=black, minimum size=0pt,inner sep=0pt]
\newtheorem{thm}{Theorem}[section]
\newtheorem{lemma}[thm]{Lemma}
\theoremstyle{definition}
\newtheorem{prob}[thm]{Problem}
\theoremstyle{definition}
\newtheorem{question}[thm]{Question}
\theoremstyle{definition}
\theoremstyle{definition}
\theoremstyle{definition}
\theoremstyle{definition}
\theoremstyle{remark}
\newtheorem{remark}[thm]{Remark}
\begin{document}

\title{Independent set and matching permutations}

\author{Taylor Ball, David Galvin, Catherine Hyry and Kyle Weingartner\thanks{Department of Mathematics, University of Notre Dame, Notre Dame IN 46556; {\tt dgalvin1@nd.edu}. Galvin supported in part by the Simons foundation. Hyry and Weingartner supported in part by NSF grant DMS 1547292.}}

\date{\today}

\maketitle
 
\begin{abstract}
Let $G$ be a graph $G$ whose largest independent set has size $m$. A permutation $\pi$ of $\{1, \ldots, m\}$ is an {\em independent set permutation} of $G$ if 
$$
a_{\pi(1)}(G) \leq a_{\pi(2)}(G) \leq \cdots \leq a_{\pi(m)}(G)  
$$
where $a_k(G)$ is the number of independent sets of size $k$ in $G$. In 1987 Alavi, Malde, Schwenk and Erd\H{o}s proved that every permutation of $\{1, \ldots, m\}$ is an independent set permutation of some graph with $\alpha(G)=m$, i.e. with largest independent set having size $m$. They raised the question of determining, for each $m$, the smallest number $f(m)$ such that every permutation of $\{1, \ldots, m\}$ is an independent set permutation of some graph with $\alpha(G)=m$ and with at most $f(m)$ vertices, and they gave an upper bound on $f(m)$ of roughly $m^{2m}$. Here we settle the question,  determining $f(m)=m^m$, and make progress on a related question, that of determining the smallest order such that every permutation of $\{1, \ldots, m\}$ is the {\em unique} independent set permutation of some graph of at most that order. More generally we consider an extension of independent set permutations to weak orders, and extend Alavi et al.'s main result to show that every weak order on $\{1, \ldots, m\}$ can be realized by the independent set sequence of some graph with $\alpha(G)=m$ and with at most $m^{m+2}$ vertices.

Alavi et al.\! also considered {\em matching permutations}, defined analogously to independent set permutations. They observed that not every permutation of $\{1,\ldots,m\}$ is a matching permutation of some graph with largest matching having size $m$, putting an upper bound of $2^{m-1}$ on the number of matching permutations of $\{1,\ldots,m\}$. Confirming their speculation that this upper bound is not tight, we improve it to $O(2^m/\sqrt{m})$.

\end{abstract}

\medskip

\noindent {\bf Keywords}: Independent set, stable set, matching, permutation, unimodality

\section{Introduction}

To a real sequence $a_1, a_2, \ldots, a_m$ we can associate a permutation $\pi$ of $[m]:=\{1, \ldots, m\}$, which gives information about the shape of the histogram of the sequence, via
\begin{equation} \label{perm-assoc}
a_{\pi(1)} \leq a_{\pi(2)} \leq \cdots \leq a_{\pi(m)}.
\end{equation}
If there are some repetitions among the $a_i$ then $\pi$ is not unique. For example, the sequence $(5,10,10,5,1)$ has associated with it each of the sequences $51423$, $54123$, $51432$ and $54132$. (Here and elsewhere we present permutations in one-line notation, so for example $51423$ represents the permutation $\pi$ with $\pi(1)=5$, $\pi(2)=1$, et cetera.) 

This association was introduced by Alavi, Malde, Schwenk and Erd\H{o}s in \cite{AMSE}, where they proposed using it to investigate sequences associated with graphs. For example, let $G$ be a (simple, finite) graph with $\alpha(G)=m$, that is, whose largest independent set (set of mutually non-adjacent vertices) has size $m$. The {\em independent set sequence} of $G$ is the sequence $(i_k(G))_{k=1}^m$ where $i_k(G)$ is the number of independent sets of size $k$ in $G$. Say that $\pi$ is an {\em independent set permutation} of $G$ if $\pi$ is one of the permutations that can be associated to the independent set sequence of $G$ via (\ref{perm-assoc}). (We do not consider $i_0(G)$, as it equals $1$ for every $G$.) 

The main theorem of \cite{AMSE} is that all $m!$ permutations of $[m]$ are independent set permutations.
\begin{thm} \label{thm-AMSE-main} \cite{AMSE}
Given $m \geq 1$ and a permutation $\pi$ of $[m]$, there is a graph $G$ with $\alpha(G)=m$ and with
\begin{equation} \label{AMSE-main-inq}
i_{\pi(1)}(G) < i_{\pi(2)}(G) < \cdots < i_{\pi(m)}(G).
\end{equation}
\end{thm}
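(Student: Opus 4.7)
The obvious first candidate is a disjoint union of cliques: for $G = K_{s_1} \sqcup \cdots \sqcup K_{s_m}$ one has $\alpha(G) = m$ and $i_k(G) = e_k(s_1, \ldots, s_m)$, the $k$th elementary symmetric polynomial. Varying the $s_i$ realizes a range of orderings (e.g.\ all $s_i = n$ for large $n$ gives $i_1 < \cdots < i_m$, while $s_i = 1$ puts $i_m$ last), but by Newton's inequalities the sequence $(e_k)$ is strictly log-concave whenever the $s_i$ are positive, and hence unimodal. Already for $m=3$ the permutation $213$ (meaning $i_2 < i_1 < i_3$) is not unimodal and so is unreachable by any disjoint union of cliques. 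So this family alone is too restrictive.

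To break the log-concavity I would turn to a complement-style construction. Choose a graph $M$ with $\omega(M) = m$, regard $V(M)$ as a subset of some larger set $[N]$ of size $N \geq |V(M)|$, and let $G = K_N \setminus E(M)$. A direct check gives $\alpha(G) = \omega(M) = m$, $i_1(G) = N$, and for $k \geq 2$, $i_k(G)$ equals the number of $k$-cliques of $M$ (since independent sets of size $\geq 2$ in $G$ must avoid the all-adjacent ``outside'' vertices and hence live inside $V(M)$, where being independent in $G$ means being a clique in $M$). The value $i_1(G) = N$ is now a \emph{free parameter}, decoupled from the higher $i_k$'s, while the clique counts of $M$ are governed by the choice of $M$ itself.

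With this in place the task reduces to: realize the prescribed relative ordering of $(i_2, \ldots, i_m)$ using the clique sequence of some $M$ with $\omega(M) = m$, and then pick $N$ large enough both to contain $M$ and to place $i_1 = N$ into its prescribed rank within the full ordering dictated by $\pi$. Since the clique sequence of $M$ is the independence sequence of $\overline M$ (and $\alpha(\overline M) = m$), this sub-problem is a close relative of the original. I would therefore set up an induction on $m$ whose outer step is one application of the complement construction, with the inductive hypothesis supplying the needed $M$. To make the recursion plug in cleanly, the right inductive statement is likely slightly stronger than the displayed theorem --- for example, allowing one to prescribe not only the order of the $i_k$'s but also a target interval for each value --- so that the choice of $N$ at the next level can always be slotted in.

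The main obstacle I anticipate is the bookkeeping: at each recursive level one has to simultaneously ensure that $M$ has clique counts in the right relative order, that $|V(M)| \le N$, and that $N$ can still be chosen to land in the prescribed slot of the overall ordering without disturbing anything. A careful accounting should yield the vertex bound of roughly $m^{2m}$ announced by Alavi--Malde--Schwenk--Erd\H{o}s, with room for improvement (as promised by the current paper) coming from more efficient placement of $N$ and tighter control on $|V(M)|$ at each level of the recursion.
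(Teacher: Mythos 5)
Your complement construction is set up correctly: for $G = K_N \setminus E(M)$ with $V(M)\subseteq [N]$, indeed $\alpha(G)=\omega(M)$, $i_1(G)=N$, and $i_k(G)$ equals the number of $k$-cliques of $M$ for $k\geq 2$. You also correctly diagnose why disjoint unions of cliques fail (Newton's inequalities force unimodality). But the recursion you propose does not close. The sub-problem you reduce to --- realize a prescribed strict order on $c_2(M),\dots,c_m(M)$ with $\omega(M)=m$, equivalently on $i_2(\overline M),\dots,i_m(\overline M)$ with $\alpha(\overline M)=m$ --- still concerns graphs with independence number $m$, not $m-1$. The induction parameter does not decrease. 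If you try to hit the sub-problem with the same complement construction, you get back exactly the same sub-problem, since the new free coordinate $i_1$ is precisely the one you just threw away. What is missing is a lifting step: a way to take a graph $H$ with $\alpha(H)=m-1$ realizing a prescribed order on $i_1(H),\dots,i_{m-1}(H)$ and produce one with $\alpha = m$ realizing a prescribed order on $i_2,\dots,i_m$. None of the obvious candidates ($H\cup K_1$, $H\cup K_n$, add a universal or isolated vertex) give a clean index shift --- they convolve rather than shift the sequence --- so the proposal as written has a genuine gap, independent of the interval bookkeeping you flag as the ``main obstacle.''

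For contrast, the paper (following Alavi--Malde--Schwenk--Erd\H{o}s) avoids recursion entirely and uses the \emph{mutual join} rather than the disjoint union: if $G = \oplus_j G_j$ (every vertex of $G_j$ joined to every vertex of $G_{j'}$ for $j\neq j'$), then independent sets cannot cross pieces, so $i_k(G) = \sum_j i_k(G_j)$ --- the independence sequences \emph{add}, not convolve. Taking $G_\pi = \oplus_{k=1}^m kK_{n_k}$ with $n_k \approx (\pi^{-1}(k)T)^{1/k}$, the dominant contribution to $i_k$ is $\pi^{-1}(k)T$ from the $kK_{n_k}$ piece, and for $T$ large all cross-terms from $jK_{n_j}$ with $j>k$ sum to less than $T$, so $i_k \in [\pi^{-1}(k)T, (\pi^{-1}(k)+1)T)$ and the order is forced. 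That additivity is exactly the decoupling you were reaching for with the complement trick, but it decouples all $m$ coordinates at once rather than only $i_1$. If you want to salvage the complement idea, you would need to supply the missing lifting construction; otherwise the join-of-cliques route is shorter and self-contained.
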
    
In the language of \cite{AMSE} the independent set sequence of a graph is {\em unconstrained} --- it can exhibit arbitrary patterns of rises and falls.

For a permutation $\pi$ denote by $g(\pi)$ the minimum order (number of vertices) over all graphs $G$ for which $\pi$ is an independent set permutation of $G$, and for each $m$ denote by $f(m)$ the maximum, over all permutations $\pi$ of $[m]$, of $g(\pi)$. Alavi et al. showed that $f(m)$ is at most roughly $m^{2m+1}$ (they did not calculate their upper bound explicitly). They speculated that $f(m)\geq m^m$, and proposed the question of determining $f(m)$.
\begin{prob} \label{AMSE-ind-Q}
\cite[Problem 1]{AMSE} Determine the smallest order large enough to realize every permutation of order $m$ as the sorted indices of the vertex independent set sequence of some graph.
\end{prob}
Our first result settles this question exactly.
\begin{thm} \label{thm-ind-set-main}
(Part 1, $f(m)\leq m^m$) For each $m \geq 1$ there is a graph $G_m$ on $m^m$ vertices with $\alpha(G)=m$ and with
\begin{equation} \label{all-equal}
i_1(G_m)=i_2(G_m)=\cdots= i_m(G_m)=m^m.    
\end{equation}
(Part 2, $f(m)\geq m^m$) On the other hand, if $\alpha(G)=m$ and $i_m(G) < m^m$ then $i_m(G) < i_{m-1}(G)$. 
\end{thm}
Note that Part 1 of Theorem \ref{thm-ind-set-main} immediately implies that $f(m)\geq m^m$, since for every permutation $\pi$ of $[m]$, $\pi$ is an independent set permutation of $G_m$. To see that Part 2 implies $f(m)\geq m^m$, consider any permutation of the form 
$$
\cdots(m-1)\cdots m \cdots 1 \cdots.
$$
Since $m$ appears later in the permutation than $m-1$, for this to be an independent set permutation of some graph $G$ requires $i_m(G) \geq i_{m-1}(G)$, and so, by (the contrapositive of) Part 2, $i_m(G) \geq m^m$. But then since $1$ appears later in the permutation than $m$, this further requires $i_1(G)\geq m^m$, so $G$ must have at least $m^m$ vertices.

Our proof that $f(m)\geq m^m$ follows almost immediately from a result of Fisher and Ryan \cite{FisherRyan} on the monotonicity of a sequence related to the independent set sequence. Our construction of $G_m$, to establish $f(m)\leq m^m$, follows the same general scheme introduced in \cite{AMSE}. There, it is shown how to construct a graph $G$ with $\alpha(G)=m$, with $i_k(G)$ being a sum. The first term of the sum is $\pi^{-1}(k)T$ (for some arbitrary constant $T$), and for $T$ sufficiently large the sum of the remaining terms can be bounded above by $T$. This puts $i_k(G)$ in the interval $[\pi^{-1}(k)T, (\pi^{-1}(k)+1)T)$, and so $\pi$ is a (actually, the unique) independent set permutation of $G$. (We describe this construction in more detail in Section \ref{sec-isperm}). We obtain $f(m)\leq m^m$ by carefully carrying out the construction in a way that allows perfect control over the lower order terms in the sum.

It is worth noting here a difference between (\ref{perm-assoc}) (which allows different terms of the sequence to have the same value) and (\ref{AMSE-main-inq}) (which does not). It is quite natural to ask what happens in Problem \ref{AMSE-ind-Q} when we require that the permutations associated with independent set sequences be unique. 
\begin{prob} \label{prob-AMSE-e}
Determine, for each $m \geq 1$, the smallest $M$ such that for every permutation $\pi$ of $[m]$ there is a graph $G$ of order at most $M$ with $\alpha(G)=m$ and with
$$
i_{\pi(1)} < i_{\pi(2)} < \cdots < i_{\pi(m)}.
$$ 
\end{prob}
In \cite{AMSE} the comment is made that Problem \ref{AMSE-ind-Q} ``is likely to remain exceeding difficult''. Given the surrounding discussion in \cite{AMSE}, it seems likely that the authors were implicitly thinking about Problem \ref{prob-AMSE-e} when they made this comment. While we do not have an exact answer to Problem \ref{prob-AMSE-e}, we are able to extend the approach used in Theorem \ref{thm-ind-set-main} to obtain bounds for $M$ in Problem \ref{prob-AMSE-e} that are significantly better than those implicit in \cite{AMSE} (see Theorem \ref{thm-ind-set-weak} below).

To a real sequence $a_1, a_2, \ldots, a_m$ we can associate a unique weak order (an ordered partition $(B_1,\ldots, B_\ell)$ of $[m]$ into non-empty blocks)
via $B_i=\{b_{i1}, b_{i2}, \ldots\}$, where
$$
a_{b_{11}} = a_{b_{12}} = \cdots <  a_{b_{21}} = a_{b_{22}} = \cdots < \cdots < a_{b_{\ell1}} = a_{b_{\ell2}} = \cdots.
$$
For example the sequence $(4,6,4,1)$ (the independent set sequence of the edgeless graph on four vertices) induces the weak order $B_1=\{4\}$, $B_2=\{1,3\}$, $B_3=\{2\}$. Theorem \ref{thm-AMSE-main} says that every weak order in which all blocks are singletons is the weak order induced by some graph, while Part 1 of Theorem \ref{thm-ind-set-main} says the same for the weak order with a single block.
\begin{thm} \label{thm-ind-set-weak}
For $m \geq 1$, for every weak order $w$ on $[m]$ there is a graph $G$ with $\alpha(G)=m$, and with fewer than $m^{m+2}$ vertices, which induces $w$.
\end{thm}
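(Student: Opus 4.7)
The plan is to adapt the construction used in Part 1 of Theorem \ref{thm-ind-set-main}, which produces a graph $G_m$ on $m^m$ vertices in which all $i_k$ simultaneously equal $m^m$. Given a weak order $w$ on $[m]$ with blocks $B_1 < B_2 < \cdots < B_\ell$, define $\rho(k)=j$ when $k \in B_j$, and pick target values $c_1<c_2<\cdots<c_\ell$ of the form $c_j = jT + \delta_j$ for a fixed parameter $T$ and small integers $\delta_j$. The goal is to produce a graph $G$ with $\alpha(G)=m$, with $i_k(G)=c_{\rho(k)}$ for every $k \in [m]$, and with fewer than $m^{m+2}$ vertices.

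The construction I have in mind proceeds in two phases mirroring the AMSE scheme recalled in the introduction. In the first phase I run a \emph{plateaued} version of the AMSE construction, in which each index $k$ is assigned the level $\rho(k)$ rather than an individual permutation value $\pi^{-1}(k)$. With $T$ sufficiently large, this places the main term of $i_k(G)$ at $\rho(k)T$, so that $i_k(G)$ lies in the interval $[\rho(k)T,(\rho(k)+1)T)$; since $\rho$ is constant on each block of $w$ and strictly increases between blocks, the desired inter-block strict inequalities already hold irrespective of the lower-order corrections. In the second phase I invoke the fine control over the lower-order terms that is the heart of Part 1 of Theorem \ref{thm-ind-set-main}, adding further small gadgets to force $i_k(G)=c_{\rho(k)}$ exactly. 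Within a single block every index shares the same main term and the same target, so the Part~1 adjustment mechanism applies uniformly block by block; interference of the gadgets for one block on the $i_k$-values of another block is handled by processing blocks in a fixed order (say largest-level first) and absorbing residual errors into later adjustments.

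For the vertex count, taking for instance $T=m^m$ bounds the main-term contribution by $\ell T \leq m \cdot m^m = m^{m+1}$, and each of the $\ell \leq m$ fine-tuning rounds contributes at most $m^m$ further vertices by the same count that worked in Part 1, yielding a total safely below $m^{m+2}$. The main obstacle, and where the real work lies, is verifying that the Part 1 fine-tuning machinery, designed to hit a single common target, is flexible enough to hit $\ell$ distinct block targets simultaneously without disturbing the main-term levels, and that the cascading correction across blocks closes within the claimed vertex budget. Everything else is bookkeeping: confirming that the plateaued main term inherits the independence-number bound $\alpha(G)=m$ from the original AMSE scheme, and confirming that the $\delta_j$ can be chosen consistently with the (at most) $m$ block targets.
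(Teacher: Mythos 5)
Your proposal correctly senses that the construction should build on Part~1 of Theorem~\ref{thm-ind-set-main} and your order-of-magnitude vertex estimate is about right, but the heart of the argument is missing. The ``plateaued AMSE main term plus fine-tuning'' plan does not survive closer inspection: in the Part~1 procedure one joins copies of $kK_m$ while stepping $k$ from $m$ down to $1$, and each such join can only \emph{increase} $i_j$ for $j\leq k$, so there is no way to shave off an overshoot, which means ``absorbing residual errors into later adjustments'' is not an operation you actually have available. Moreover, the non-negativity of the adjustment coefficients $a_k$ in Part~1 relies crucially on the fact that every index $k$ has the same target $m^m$; with block-varying targets $c_{\rho(k)}$ that argument does not carry over, and the sequential procedure can get stuck with $i_k$ already above its (smaller) target once the higher indices have been processed. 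Finally, your proposal to ``process blocks in a fixed order'' conflicts with the fact that the join mechanism operates index-by-index, while the indices belonging to a given block are scattered across $[m]$; the two orderings do not align.

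The paper's proof hinges on an idea your proposal does not contain: for each $k\in[m]$ it constructs a gadget graph $H_k$ (on at most $2m^m-m^{m-1}$ vertices, with $\alpha(H_k)=m$) such that $i_j(H_k)$ equals a single constant $s(k)$ for \emph{every} $j\neq k$, while $i_k(H_k)=s(k)+m^{m-1}$ --- that is, $H_k$ raises $i_k$ by exactly $m^{m-1}$ relative to every other coordinate of the independent set sequence. Given the weak order $(B_1,\ldots,B_\ell)$, the realizing graph is then the mutual join of one copy of $G_m$ per element of $B_1$ together with $j-1$ copies of $H_k$ for each $k\in B_j$, $j\geq 2$; by additivity under $\oplus$, every index in $B_j$ ends up with a common value exactly $(j-1)m^{m-1}$ above the $B_1$-value, and the resulting weak order is $w$. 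Building $H_k$ in turn requires the quantitative bound $a_k\geq m^{k-1}$ of Lemma~\ref{lem-a_k-large}, which supplies the slack needed to insert the $m^{m-1}$-bump at stage $k$ of the Part~1 procedure and still have room to continue to lower indices without overshooting $m^m$. This gadget-plus-join architecture is what makes the exact within-block equalities attainable, and it is precisely the step your plan defers as ``the main obstacle'' without resolving.
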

So although there are many more weak orders on $[m]$ than there are permutations --- $(1/2)m!(\log_2e)^{m+1}$ (see e.g. \cite{Barthelemy}) as opposed to $m!$ --- it does not take too many more vertices to induce them all. Note also that by Theorem \ref{thm-ind-set-main}, any weak order on $[m]$ that has $m-1$ and $m$ in the same block, and $1$ in a block with a higher index, cannot be induced by a graph with $m^m$ or fewer vertices. The analog of Problem \ref{AMSE-ind-Q} for weak orders --- where in the range $(m^m, m^{m+2})$ is the smallest order sufficient to realize every weak order on $[m]$? --- remains open.

\medskip

Alavi et al. also considered the {\em edge independent set sequence} or {\em matching sequence} of a graph. Let ${\mathcal M}_n$ denote the set of graphs with $\nu(G)=n$, that is, whose largest matching (set of edges no two sharing a vertex) has $n$ edges.
The matching sequence of $G \in {\mathcal M}_n$ is $(m_k(G))_{k=1}^n$ where $m_k(G)$ is the number of matchings in $G$ with $k$ edges. Say that $\pi$ is a {\em matching permutation} of $G$ if $\pi$ is one of the permutations that can be associated to the matching sequence of $G$ via (\ref{perm-assoc}). (Note that throughout our discussion of matchings, we will only consider simple graphs.)

In contrast to independent set permutations, there are permutations that are not the matching permutation of any graph. Indeed, Schwenk \cite{Schwenk} showed that the matching sequence of any graph $G \in {\mathcal M}_n$ is unimodal in the strong sense that for some $k$,
$$
m_1(G) < m_2(G) < \cdots < m_k(G) \geq m_{k+1}(G) > m_{k+2}(G) > \cdots > m_n(G).
$$
It follows that the permutations of $[n]$ that can be the matching permutations of a graph in ${\mathcal M}_n$ must have
\begin{equation}\label{unimodal}
\begin{array}{c}     
     \pi^{-1}(1)<\pi^{-1}(2)<\cdots < \pi^{-1}(k-1)\\~\mbox{and}\\
     \pi^{-1}(n)< \pi^{-1}(n-1)< \cdots < \pi^{-1}(k+1),
\end{array}
\end{equation}  
where $k=\pi(n)$.
(This restriction on $\pi$ can also be deduced from the  real-rootedness of the matching polynomial, first established by Heilmann and Lieb \cite{HL}.) Following Alavi et al., we refer to permutations satisfying (\ref{unimodal}) as {\em unimodal} permutations.

There are $\sum_{k=0}^{n-1} \binom{n-1}{k} = 2^{n-1}$
unimodal permutations of $[n]$. To see this, note that to construct a unimodal permutation we first select $k=\pi(n)$, which must appear as the last entry of the permutation in one-line notation, and then select the $k-1$ locations (from among the first $n-1$) where $1, \ldots, k-1$ appear; this completely determines the permutation since, as observed in (\ref{unimodal}) above, the entries $1$ through $k$ must appear in $\pi$ in ascending order, while the entries $k+1$ through $n$ must appear in descending order. So, writing $M_n$ for the set of permutations $\pi$ that are the matching permutations of some graph in ${\mathcal M}_n$, we have
$M_n \leq 2^{n-1}$.
This bound was observed in \cite{AMSE}, where the following problem was posed.
\begin{prob} \label{AMSE-matching-q}
\cite[Problem 2]{AMSE} 
Characterize the permutations realized by the edge independence sequence. In particular, can all $2^{n-1}$ unimodal permutations of $[n]$ be realized?   
\end{prob}
We do not address the characterization problem, but our next result answers the particular question: a vanishing proportion of unimodal permutations are the matching permutations of some graph.
\begin{thm} \label{thm-ub-on-matching}
We have $M_n=o(2^n)$. More precisely, there is a constant $c$ such that for $n \geq 1$
\begin{equation} \label{eq-up-on-count}
M_n \leq \frac{c2^n}{\sqrt{n}}.
\end{equation}
\end{thm}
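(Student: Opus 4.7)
My plan is to use the Heilmann--Lieb theorem on real-rootedness of the matching polynomial to derive additional constraints beyond plain unimodality, and then convert these into a counting bound.

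By Heilmann--Lieb \cite{HL}, $M_G(x)=\sum_{k\ge 0}m_k(G)\,x^k$ has only real non-positive roots, so Newton's inequality
\[
m_k^{2}\ \geq\ \frac{(k+1)(n-k+1)}{k(n-k)}\, m_{k-1}\, m_{k+1}, \qquad 1\leq k\leq n-1,
\]
holds with strict inequality; equivalently, $b_k:=m_k(G)/\binom{n}{k}$ is strictly log-concave.

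I would then parametrize unimodal permutations of $[n]$ by binary words $w\in\{L,R\}^{n-1}$ as follows: rank $n$ sits at the mode position $k^{*}$, and each subsequent rank $r=n-1,n-2,\ldots,1$ is placed immediately to the left ($L$) or right ($R$) of the already-filled interval. This gives a bijection between $\{L,R\}^{n-1}$ and unimodal permutations of $[n]$, with $k^{*}=1+(\#L\text{'s in }w)$. A matching permutation corresponds to a word whose letter at each step records which of the two boundary $m$-values is larger.

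The heart of the argument is to show that Newton's inequality forces the words arising from matching sequences to lie in a set of size $O(2^{n}/\sqrt{n})$. The binomial factor $(k+1)(n-k+1)/(k(n-k))$ in Newton is minimized at $k=\lceil n/2\rceil$ and grows sharply at the extremes, so the constraint is weakest near the center and strongest at the ends; combined with the decreasing-ratio structure of $m_{k+1}/m_k$ (itself a consequence of Newton), this \emph{shape} of the constraint is what I would exploit to bound the admissible words by a central binomial count $\binom{n-1}{\lfloor(n-1)/2\rfloor}=\Theta(2^n/\sqrt n)$ (by Stirling).

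The main obstacle is extracting a clean combinatorial condition from Newton's inequality. One approach is to compare an arbitrary matching sequence to the extremal sequences $\binom{n}{k}\lambda^{n-k}$ (which saturate Newton, corresponding to all roots equal to $-\lambda$); there the word is fully determined by $\lambda$, and a perturbation argument should then control the word for general matching polynomials. An alternative is to appeal to Darroch's inequality to pin the mode of $m_k$ within $\pm 1$ of the mean $\mu_G=M_G'(1)/M_G(1)$, and use Bernoulli-sum concentration for $M_G(x)/M_G(1)$ to bound the fluctuations of the word around a Gaussian-type profile.
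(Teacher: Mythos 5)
Your encoding of unimodal permutations by binary words is essentially the paper's U--D encoding, and the target count $\binom{n-1}{\lfloor (n-1)/2\rfloor}=\Theta(2^n/\sqrt n)$ is exactly the count the paper lands on (left factors of Dyck words). So the combinatorial skeleton of your plan is sound. But the source from which you propose to extract the constraint --- Heilmann--Lieb real-rootedness and Newton's inequalities --- cannot work, and this is a genuine gap rather than a fixable detail.

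The problem is that Newton's inequalities (equivalently, log-concavity of $b_k:=m_k/\binom{n}{k}$) place no restriction on \emph{where} the mode of $(m_k)$ sits. A polynomial $\prod_{i=1}^n(1+r_ix)$ with all $r_i$ small has coefficient sequence $m_1>m_2>\cdots>m_n$, i.e.\ mode at $k=1$, even though it is real-rooted and satisfies strict Newton. The paper makes precisely this observation after Lemma \ref{lem-2beats1}: the permutation $n(n-1)\cdots 321$ \emph{is} realized by a real-rooted polynomial of this form but is \emph{not} a matching permutation for $n\ge 4$, so the restrictions being exploited ``are not just restrictions coming in disguise from the real-rooted property of the matching polynomial.'' Your Darroch/concentration variant has the same defect: Darroch pins the mode near the mean $\mu_G$, but nothing in real-rootedness forces $\mu_G$ to be near $n/2$ (for $\prod(1+r_ix)$ with small $r_i$, $\mu_G$ is near $0$). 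So no argument that inputs only real-rootedness can yield the needed half-ballot constraint on the L--R words.

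What the paper actually proves, and what you need, is the combinatorial statement (Theorem \ref{thm-main-res}): if $\nu(G)\ge n$ then $m_k(G)<m_\ell(G)$ whenever $0\le k\le \lfloor n/2\rfloor-1$ and $k<\ell<n-k$. This is established by a double induction --- on $n$, and for fixed $n$ on the number of edges --- using the deletion identity $m_j(G)=m_j(G-uv)+m_{j-1}(G-u-v)$ for an edge $uv$, together with $\nu(G-u-v)\ge n-2$. It is this theorem (not real-rootedness) that forbids a U--D word from having an initial segment with three more D's than U's: if after $j$ U's and $j+3$ D's you have placed $1,\dots,j$ and $n,\dots,n-(j+2)$, you are forced into $m_{j+1}>m_{n-(j+2)}$, contradicting Theorem \ref{thm-main-res} with $k=j+1$, $\ell=n-(j+2)$. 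Counting U--D words of length $n-1$ avoiding that pattern gives the $O(2^n/\sqrt n)$ bound. So you should replace the Newton-based core of your proposal with a deletion induction proving a mode-location theorem in the spirit of Theorem \ref{thm-main-res}; the encoding and the final asymptotic count can stay as you have them.
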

In the other direction, the perfect matching with $n$ edges gives a lower bound on $M_n$ of $2^{\lfloor (n-1)/2 \rfloor}$. Indeed, the matching sequence of the perfect matching with $n$ edges is $\left(\binom{n}{k}\right)_{k=1}^n$, which has $\lfloor (n-1)/2 \rfloor$ pairs of equal terms ($\binom{n}{1}=\binom{n}{n-1}$, $\binom{n}{2}=\binom{n}{n-2}$, et cetera), leading to $2^{\lfloor (n-1)/2 \rfloor}$ associated permutations of $[n]$. We can improve this by an additive term of $\Omega(n)$, but we do not give the details here.

\medskip

We give the proofs of our results concerning independent set permutations and weak orders in Section \ref{sec-isperm}, and address matching permutations in Section \ref{sec-matperm}. We end with some questions and comments in Section \ref{sec-questions}.

\section{Independent set permutations} \label{sec-isperm}

We begin with the proof of Part 2 of Theorem \ref{thm-ind-set-main}, $f(m)\geq m^m$. This turns out to come almost immediately from a theorem of Fisher and Ryan \cite{FisherRyan}, a result which they remark ``brings order into [the] chaos'' of the independent set sequence observed by Alavi et al..
\begin{thm} \label{thm-FisherRyan}
For any graph $G$ with $\alpha(G)=m$, we have
$$
\left(\frac{i_1(G)}{\binom{m}{1}}\right)^\frac{1}{1} \geq \left(\frac{i_2(G)}{\binom{m}{2}}\right)^\frac{1}{2} \geq  \cdots \geq \left(\frac{i_{m-1}(G)}{\binom{m}{m-1}}\right)^\frac{1}{m-1} \geq \left(\frac{i_m(G)}{\binom{m}{m}}\right)^\frac{1}{m}.
$$
\end{thm}
The last inequality above (which is all we need) says that $m^m i_m(G)^{m-1} \leq i_{m-1}(G)^m$. If also $i_m(G) < m^m$ then this implies that $i_m(G)^m < i_{m-1}(G)^m$, or $i_m(G)<i_{m-1}(G)$, as claimed.

\begin{remark}
In an earlier version of this paper \cite{BHGW-arXiv} we obtained Part 2 of Theorem \ref{thm-ind-set-main} by combining results of Frankl, F\"uredi and Kalai \cite{FFK} and Frohmader \cite{Frohmader} on Kruskal-Katona type theorems for colored (or balanced) flag complexes. Invoking Theorem \ref{thm-FisherRyan} (whose short proof does not require consideration of flag complexes) leads to a considerably more direct proof.   
\end{remark}

\medskip

We now move on to the proof of Part 1 of Theorem \ref{thm-ind-set-main}, $f(m)\leq m^m$. We begin with an outline of the construction, which is very similar to one described in \cite{AMSE}. Recall that our goal is to construct a graph $G_m$ with $\alpha(G)=m$ that has $m^m$ independent sets of size $k$ for each $k \in [m]$. A key idea that we use throughout is the effect of the join operation on independent set sequences. For a collection $\{G_j : j \in J\}$ of graphs, denote by $\oplus_{j \in J} G_j$ the graph consisting of a union of disjoint copies of the $G_j$, with every vertex in each $G_j$ adjacent to every vertex in $G_{j'}$ for each $j' \neq j$ --- the {\em mutual join} of the $G_j$. The effect of $\oplus$ on independent set sequences is additive: if $G=\oplus_{j \in J} G_j$ then for $k \geq 1$,
\begin{equation} \label{join}
i_k(G) = \sum_{j \in J} i_k(G_j),
\end{equation}
because no independent set in $G$ can have vertices in two different $G_j$'s. We will use (\ref{join}) repeatedly in the sequel, usually without comment.

Given a permutation $\pi$ of $[m]$, to construct a graph $G$ satisfying (\ref{AMSE-main-inq}) (i.e., $i_{\pi(1)}(G) < \cdots < i_{\pi(m)}(G)$)  Alavi et al. \cite{AMSE} consider a graph of the form 
$$
G_{\pi}:=\oplus_{k=1}^m kK_{n_k},
$$
where $n_k = (\pi^{-1}(k)T)^{1/k}$ for some large integer $T$, and where $kK_{n_k}$ denotes $k$ vertex disjoint copies of the complete graph $K_{n_k}$ on $n_k$ vertices. By (\ref{join}) we have
\begin{equation} \label{AMSE-sum}
i_k(G_{\pi}) = \pi^{-1}(k)T + \sum_{j=k+1}^m \binom{j}{k}(\pi^{-1}(j)T)^\frac{k}{j}. 
\end{equation}
Here the term $\pi^{-1}(k)T$ is the count of independent sets of size $k$ in $kK_{n_k}$, and for $j>k$ the summand $\binom{j}{k}(\pi^{-1}(j)T)^\frac{k}{j}$ counts independent sets of size $k$ in $jK_{n_j}$; there are no independent sets of size $k$ in any $jK_{n_j}$ for $j < k$. For $k < m$ we have
$$
\sum_{j=k+1}^m \binom{j}{k}(\pi^{-1}(j)T)^\frac{k}{j} \leq T^\frac{k}{k+1} \sum_{j=k+1}^m \binom{j}{k}\pi^{-1}(j)^\frac{k}{j} \leq T^\frac{m}{m+1} \left(m2^mm^\frac{m}{m+1}\right).
$$
For large enough $T=T(m)$ the last expression above is strictly smaller than $T$, so that from (\ref{AMSE-sum}) we get $\pi^{-1}(k)T \leq i_k(G_{\pi}) < (\pi^{-1}(k)+1)T$. This inequality also holds when $k=m$ (in which case the summation in (\ref{AMSE-sum}) is empty). From all this (\ref{AMSE-main-inq}) follows.

To more carefully control the sum in (\ref{AMSE-sum}), and allow us to construct a graph $G_m$ with $m^m$ independent sets of all sizes from $1$ to $m$, we modify this construction. Before doing so, we give some intuition.
 
The graph $G_0:=mK_m$ has $\alpha(G_0)=m$, $i_m(G_0)=i_{m-1}(G_0)=m^m$, and $i_k(G_0) = \binom{m}{k}m^k < m^m$ for $k < m-1$. We need to increase the count of independent sets of size $m-2$ by
$$
m^m-\binom{m}{2}m^{m-2}=m^{m-2}\left(m^2-\binom{m}{2}\right):=a^{(m)}_2m^{m-2},
$$
without changing the number of independent sets of sizes $m$ or $m-1$. By (\ref{join}), the graph $G_2:=\oplus_{i=1}^{a^{(m)}_2} (m-2)K_m$ (the mutual join of $a^{(m)}_2$ copies of $(m-2)K_m$) has $i_{m-2}(G_2)=a^{(m)}_2m^{m-2}$, and also has $i_m(G_2)=i_{m-1}(G_2)=0$. Hence, again by (\ref{join}), $\alpha(G_0 \oplus G_2)=m$, $i_m(G_0 \oplus G_2)=i_{m-1}(G_0 \oplus G_2)=i_{m-2}(G_0 \oplus G_2)=m^m$, and $i_{m-3}(G_0 \oplus G_2)=\binom{m}{3}m^{m-3} + a^{(m)}_2(m-2)m^{m-3}$. We need to add
$$
m^{m-3}\left(m^3 - \binom{m}{3} - a^{(m)}_2(m-2)\right) := a^{(m)}_3m^{m-3}
$$
independent sets of size $m-3$ (without adding any independent sets of sizes $m, m-1$ or $m-2$). We achieve this by setting
$$
G_3:= \oplus_{i=1}^{a^{(m)}_3} (m-3)K_m
$$
and considering $G_0\oplus G_2 \oplus G_3$. (Note that $a^{(m)}_3 \geq 0$, being a cubic in $m$ with non-negative coefficients.)

We continue in this manner until we reach a graph which satisfies (\ref{all-equal}), which we declare to be $G_m$. We have to check that at no point, while fixing the number of independent sets of size $k$ to be $m^m$, do we cause the number of independent sets of size $j$ to be greater than $m^m$, for some $1 \leq j < k$. This check is the main point of the formal proof of Theorem \ref{thm-ind-set-main}, Part 1.

\medskip

\begin{proof} (Theorem \ref{thm-ind-set-main}, Part 1)
For $m \geq 1$, define a sequence $(a^{(m)}_0, a^{(m)}_1, \ldots, a^{(m)}_{m-1})$ via
\begin{equation} \label{recurrence}
m^k = a^{(m)}_0\binom{m}{k} + a^{(m)}_1\binom{m-1}{k-1} + \cdots + a^{(m)}_{k-1}\binom{m-(k-1)}{1} + a^{(m)}_k\binom{m-k}{0}
\end{equation}
for $k=0, \ldots, m-1$. Note that the $m$ relations in (\ref{recurrence}) do indeed uniquely determine the $a^{(m)}_k$: first taking $k=0$ forces $a^{(m)}_0=1$; then taking $k=1$ further forces $a^{(m)}_1=0$; then taking $k=2$ forces $a^{(m)}_2=m^2-\binom{m}{2}$, and so on. The motivation behind this definition as follows: we will go through an iterative procedure (the one described above) to set the number of independent sets of each size to be $m^m$, starting with independent sets of size $m$, and working down. When we come to fix the number of independent sets of size $m-k$ to be $m^m$, it will turn out that we need to add $a^{(m)}_k m^{m-k}$ such, which we will achieve by successively joining $a^{(m)}_k$ copies of $(m-k)K_m$ to what has thus far been constructed.   Evidently each $a^{(m)}_i$ is an integer; but in fact $a^{(m)}_i \geq 0$, as we now show.

For $m=1$ the sequence consists of the single term $a^1_0=1$, and for $m=2$ the sequence is $(1,0)$. So consider $m \geq 3$. We will show, for each such $m$, that $a^{(m)}_k \geq 0$ for all $0 \leq k \leq m-1$. We evidently have $a^{(m)}_0=1$. Now consider a $k$ with $1 \leq k \leq m-1$. Starting by multiplying both sides of the $k-1$ instance of (\ref{recurrence}) by $m$, and with the rest of the steps justified below, we have
\begin{eqnarray*}
m^k & = & a^{(m)}_0 m\binom{m}{k-1} + a^{(m)}_1 m\binom{m-1}{k-2} + \cdots + a^{(m)}_{k-1} m\binom{m-(k-1)}{0} \\
& \geq & a^{(m)}_0\binom{m}{k} + a^{(m)}_1\binom{m-1}{k-1} + \cdots + a^{(m)}_{k-1}\binom{m-(k-1)}{1} \\
& = & m^k - a^{(m)}_k,
\end{eqnarray*}
so $a^{(m)}_k\geq 0$.
The first inequality uses
$$
m\binom{m-j}{k-1-j} \geq \binom{m-j}{k-j},
$$
valid for $m \geq 3$, $k \in \{1, \ldots, m-1\}$ and $j \in \{0, \ldots, k-1\}$, and the second equality uses  (\ref{recurrence}).

Now consider the graph
$G_m = \oplus_{k=0}^{m-1} G_k$ where $G_k = \oplus_{j=1}^{a^{(m)}_k} (m-k)K_m$. 
We have $\alpha(G_m)=m$ and, for each $k \in \{0 \ldots, m-1\}$
\begin{eqnarray*}
i_{m-k}(G_m) & = & a^{(m)}_0\binom{m}{k}m^{m-k} + a^{(m)}_1\binom{m-1}{k-1}m^{m-k} + \cdots  + a^{(m)}_k\binom{m-k}{0}m^{m-k} \\
& = & m^{m-k}\left(a^{(m)}_0\binom{m}{k} + a^{(m)}_1\binom{m-1}{k-1} + \cdots +  a^{(m)}_k\binom{m-k}{0}\right) \\
& = & m^m,
\end{eqnarray*}
the last equality by (\ref{recurrence}). The main points of the calculation above are that the only parts of $G_m$ that contribute to $i_{m-k}(G_m)$ are those of the form $aK_m$ for $a\geq m-k$, and that
$$
i_{m-k}(aK_m) = \binom{a}{m-k}m^{m-k} = \binom{m-(m-a)}{k-(m-a)}m^{m-k}.
$$
\end{proof}

\medskip

We now turn to the proof of Theorem \ref{thm-ind-set-weak}, concerning weak orders. The case $m=1$ is trivial, and $m=2$ is easy: the three weak orders on $[2]$ are achieved by $2K_1$, $2K_2$ and $K_3 \cup K_2$. So from here on we assume $m \geq 3$.

We will construct
\begin{itemize}
\item a graph $H_1$ with $m^m+m^{m-1}$ vertices, with $m^m$ independent sets of each size in $\{2, \ldots, m\}$, $m^m+m^{m-1}$ independent sets of size $1$, and with $\alpha(H_1)=m$;
\item a graph $H_m$ with $2m^m-m^{m-1}$ vertices, with $2m^m-m^{m-1}$ independent sets of each size in $\{1, \ldots, m-1\}$, $2m^m$ independent sets of size $m$, and with $\alpha(H_m)=m$;
\item and for each $k \in \{2, \ldots, m-1\}$, a graph $H_k$ with $m^m$ vertices, with $m^m$ independent sets of each size in $\{1, \ldots, m\}\setminus\{k\}$, with $m^m+m^{m-1}$ independent sets of size $k$, and with $\alpha(H_m)=m$.
\end{itemize}
The main point here is that for each $k$ there is a value $s(k)$ such that $H_k$ has $s(k)$ independent sets of all sizes except $k$, and has $s(k)+m^{m-1}$ independent sets of size $k$ (specifically $s(k)=m^m$ for $k \neq m$ and $s(m)=2m^m-m^{m-1}$). 

Let $w=(B_1, \ldots, B_\ell)$ be a weak order on $[m]$. Construct a graph $H(w)$ as follows: $H(w)$ is the mutual join of 
\begin{itemize}
\item one copy of $G_m$ for each $k \in B_1$ (here and later, $G_m$ is the graph from Theorem \ref{thm-ind-set-main}, Part 1; recall that it is a graph on $m^m$ vertices, with largest independent set having size $m$, and with $m^m$ independent sets of size $k$ for each $k=1, \ldots, m$); 
\item one copy of $H_k$ for each $k \in B_2$; 
\item and in general $j-1$ copies of $H_k$ for each $k \in B_j$. 
\end{itemize}
For $t \in B_j$, for any $1 \leq j \leq \ell$, we have
\begin{equation} \label{eq-blip}
i_t(H(w)) = \left(m^m|B_1| + \sum_{k \in B_2} s(k) + 2\sum_{k \in B_3} s(k) + \cdots + \sum_{k \in B_\ell} (\ell-1)s(k)\right) + (j-1)m^{m-1}. 
\end{equation}
Indeed, $H(w)$ has $m^m$ independent sets of size $t$, coming from each of the $|B_1|$ copies of $G_m$ in the construction; for each $k \in B_2$ it has a further $s(k)+{\bf 1}_{\{t=k\}}m^{m-1}$ independent sets of size $t$, coming from the $H_k$; and in general, for each $k \in B_j$ ($1 \leq j \leq \ell$) it has a further $(j-1)\left(s(k)+{\bf 1}_{\{t=k\}}m^{m-1}\right)$ independent sets of size $t$, coming from the $j-1$ copies of $H_k$. Summing all these up, and noting that ${\bf 1}_{\{t=k\}}$ will take the value $1$ at most once (for that $j$ for which $t \in B_j$, if $j >1$), we obtain (\ref{eq-blip}). 

Note that the term in parentheses in (\ref{eq-blip}) depends only on the weak order $w$, and in particular is independent of $t$; let this term be denoted by $c(w)$. We have that $H(w)$ has
\begin{itemize}
\item $c(w)$ independent sets of size $t$ for each $t \in B_1$;
\item $c(w)+ m^{m-1}$ independent sets of size $t$ for each $t \in B_2$;
\item and in general, $c(w)+ (j-1)m^{m-1}$ independent sets of size $t$ for each $t \in B_j$, for $1 \leq j \leq \ell$,
\end{itemize} 
and so the weak order induced by $H(w)$ is indeed $w$.

Among the $H_k$ none has more than $2m^m-m^{m-1}$ vertices, so the order of $H(w)$ is at most
\begin{equation} \label{verts-in-Hw}
m^m|B_1| + (|B_2|+2|B_3|+\cdots + (\ell-1)|B_\ell|)(2m^m-m^{m-1}).
\end{equation} 
If any of the $B_i$'s has size at least $2$, then the quantity in (\ref{verts-in-Hw}) can be increased by replacing $|B_i|$ with $|B_i|-1$ and $|B_{i+1}|$ with $|B_{i+1}|+1$ (creating a new, $(\ell+1)$st, block if $i=\ell$). It follows that subject to the constraints $\sum_i |B_i| =m$ and $|B_i| \geq 1$, the quantity in (\ref{verts-in-Hw}) is maximized by
$$
m^m + (1+2+\ldots + (m-1))(2m^m+m^{m-1}) < m^{m+2}.
$$
This gives Theorem \ref{thm-ind-set-weak}; so our goal (which occupies the rest of the section) is to construct $H_k$, for $k \in\{1, \ldots,m\}$.

\medskip

In the proof of Theorem \ref{thm-ind-set-main}, we required $a^{(m)}_k \geq 0$.  To construct $H_k$, we need a better bound.
\begin{lemma} \label{lem-a_k-large}
For $k \geq 2$ (and $m \geq 3$), $a^{(m)}_k \geq m^{k-1}$.
\end{lemma}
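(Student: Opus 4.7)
The plan is to prove the bound directly from the defining identity~(\ref{recurrence}), used at two consecutive levels. First I would rearrange~(\ref{recurrence}) at level $k$ as
$$a_k = m^k - S, \qquad S:=\sum_{j=0}^{k-1}a_j\binom{m-j}{k-j},$$
which reduces the task to proving the upper bound $S \leq (m-k+1)m^{k-1}$: this would give
$$a_k \geq m^k - (m-k+1)m^{k-1} = (k-1)m^{k-1} \geq m^{k-1}$$
whenever $k\geq 2$, completing the lemma (and in fact giving the stronger bound $(k-1)m^{k-1}$).

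For the upper bound on $S$, I would use the elementary ratio identity
$$\binom{m-j}{k-j} = \binom{m-j}{k-j-1}\cdot\frac{m-k+1}{k-j}.$$
In the range of interest ($0\leq j\leq k-1$ and $k\leq m-1$) the denominator $k-j$ is at least $1$ and the numerator $m-k+1$ is a positive integer, so $\binom{m-j}{k-j}\leq (m-k+1)\binom{m-j}{k-j-1}$. Since the $a_j$'s are all non-negative, as established already in the proof of Theorem~\ref{thm-ind-set-main}, Part~1, multiplying by $a_j$ and summing preserves the inequality, yielding
$$S \leq (m-k+1)\sum_{j=0}^{k-1}a_j\binom{m-j}{k-1-j} = (m-k+1)m^{k-1},$$
where the final equality is~(\ref{recurrence}) applied at level $k-1$.

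I do not expect any real obstacle: the proof amounts to a one-line ratio estimate on consecutive binomial coefficients glued to two applications of~(\ref{recurrence}). The one thing to be careful about is the direction of the inequality --- we want an upper bound on $S$ in order to extract a lower bound on $a_k$, and the ratio identity provides exactly that, using only the already-established non-negativity of the $a_j$'s. No induction on $k$ and no generating function manipulation seem to be required.
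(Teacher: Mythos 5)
Your proof is correct, and it takes a genuinely different (and arguably cleaner) route than the paper's. The paper first inverts the lower-triangular matrix $M$ with entries $\binom{m-j}{i-j}$ to obtain the explicit alternating formula
$$a_k = m^k - m^{k-1}\binom{m-(k-1)}{1} + m^{k-2}\binom{m-(k-2)}{2} - \cdots + (-1)^k\binom{m}{k},$$
then checks that the terms form a strictly decreasing sequence, and truncates the alternating sum after two terms to get $a_k > (k-1)m^{k-1}$. Your argument bypasses the matrix inversion and the alternating-sum analysis entirely: writing $a_k = m^k - S$ from~(\ref{recurrence}) at level $k$, applying the elementary ratio bound $\binom{m-j}{k-j}\leq(m-k+1)\binom{m-j}{k-j-1}$ term by term (valid since $k-j\geq 1$), and recognizing the resulting sum as~(\ref{recurrence}) at level $k-1$ gives $S\leq(m-k+1)m^{k-1}$ and hence $a_k\geq(k-1)m^{k-1}$ --- the same intermediate bound as the paper, obtained in one line. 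The only input you need beyond the recurrence itself is the non-negativity of the $a_j$'s, which the paper establishes before stating the lemma, so the dependency is in the right order. One small remark: your parenthetical restriction $k\leq m-1$ is not actually needed; the argument works verbatim for $k=m$ (where $m-k+1=1$), and the paper in fact extends the sequence to $a_m$.
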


\begin{proof} 
We will use an explicit expression for the $a^{(m)}_k$. It will be convenient in what follows to extend the sequence $(a^{(m)}_0, \ldots, a^{(m)}_{m-1})$ to $(a^{(m)}_0, \ldots, a^{(m)}_m)$, by using (\ref{recurrence}) to also define $a^{(m)}_m$.

Let $\vec{a}^{(m)}$ be the column vector with $a^{(m)}_j$ in the $j$th position (with the positions indexed from $0$ to $m$), and $\vec{m}$ the column vector with $m^j$ in the $j$th position; so 
$$
\vec{a}^{(m)} = \left[a^{(m)}_0~a^{(m)}_1~\cdots~a^{(m)}_m\right]^{\tt T}~~~\mbox{and}~~~\vec{m} = \left[1~m~\cdots~m^m\right]^{\tt T}.
$$
From (\ref{recurrence}) we have $M\vec{a}^{(m)}=\vec{m}$ where $M$ is the $(m+1)$ by $(m+1)$ matrix with $\binom{m-j}{i-j}$ in the $(i,j)$ position (rows and columns indexed from $0$). Here we understand $\binom{n}{c}$ to be $0$ for negative $c$. Since $M$ is lower triangular with $1$'s down the diagonal it is invertible, and it is well known that $M^{-1}$ is the matrix with $(-1)^{i-j}\binom{m-j}{i-j}$ in the $(i,j)$ position (see for example \cite{CallVelleman}). To illustrate this fact, and the structure of $M$ and $M^{-1}$, consider $M^{-1}$ in the case $m=4$:
$$
\left[\begin{array}{ccccc}
\binom{4}{0} & 0 & 0 & 0 & 0 \\[3pt]
\binom{4}{1} & \binom{3}{0} & 0 & 0 & 0 \\[3pt]
\binom{4}{2} & \binom{3}{1} & \binom{2}{0} & 0 & 0 \\[3pt]
\binom{4}{3} & \binom{3}{2} & \binom{2}{1} & \binom{1}{0} & 0 \\[3pt]
\binom{4}{4} & \binom{3}{3} & \binom{2}{2} & \binom{1}{1} & \binom{0}{0}
\end{array}\right]^{-1} = \left[\begin{array}{ccccc}
1 & 0 & 0 & 0 & 0 \\
4 & 1 & 0 & 0 & 0 \\
6 & 3 & 1 & 0 & 0 \\
4 & 3 & 2 & 1 & 0 \\
1 & 1 & 1 & 1 & 1
\end{array}\right]^{-1} = \left[\begin{array}{ccccc}
1 & 0 & 0 & 0 & 0 \\
-4 & 1 & 0 & 0 & 0 \\
6 & -3 & 1 & 0 & 0 \\
-4 & 3 & -2 & 1 & 0 \\
1 & -1 & 1 & -1 & 1
\end{array}\right].
$$
For completeness, we provide a proof that $M^{-1}$ is as claimed. Consider the matrix $M\overline{M}$, where $\overline{M}$ has $(-1)^{i-j}\binom{m-j}{i-j}$ in the $(i,j)$ position. The $(k,\ell)$ entry of $M\overline{M}$ is clearly $0$ for $k<\ell$, and $1$ for $k=\ell$. For $\ell<k$ the $(k,\ell)$ entry is
\begin{eqnarray*}
\sum_{t=\ell}^k (-1)^{t-\ell} \binom{m-t}{k-t}\binom{m-\ell}{t-\ell} & = & (-1)^{\ell-k} \sum_{t=\ell}^k (-1)^{k-t} \frac{(m-t)!}{(k-t)!(m-k)!}\frac{(m-\ell)!}{(t-\ell)!(m-t)!} \\
& = & (-1)^{\ell-k}\binom{m-\ell}{m-k} \sum_{t=\ell}^k (-1)^{k-t} \frac{(k-\ell)!}{(k-t)!(t-\ell)!} \\
& = & (-1)^{\ell-k} \binom{m-\ell}{m-k} \sum_{t=\ell}^k (-1)^{k-t}\binom{k-\ell}{k-t} \\
& = & 0,
\end{eqnarray*}
the last equality following from the standard fact that the alternating sum of binomial coefficients is $0$. This shows that $M\overline{M}$ is the identity, and so the inverse of $M$ is as claimed.

Since $\vec{a}^{(m)}=M^{-1}\vec{m}$ we have
\begin{equation} \label{eq-a_k-exp}
a^{(m)}_k = m^k -m^{k-1}\binom{m-(k-1)}{1}+m^{k-2}\binom{m-(k-2)}{2} - \cdots + (-1)^k\binom{m}{k}.
\end{equation}
For $m \geq 3$ and $k \geq 2$, it is easily checked that the sequence
$$
m^k, ~m^{k-1}\binom{m-(k-1)}{1}, ~ m^{k-2}\binom{m-(k-2)}{2}, \ldots,~\binom{m}{k}
$$
is strictly decreasing. 
Lower bounding $a^{(m)}_k$ by the sum of the first two terms of the decreasing alternating sum on the right-hand side of (\ref{eq-a_k-exp}) we get 
$$
a^{(m)}_k > m^k - m^{k-1}\binom{m-(k-1)}{1} = (k-1)m^{k-1} \geq m^{k-1},
$$
as claimed.
\end{proof}

\medskip

Another tool we will need in the construction of the $H_k$ is the following easy observation.
\begin{lemma} \label{lem-obsv-dec}
If $\ell \leq n$ (with $\ell, n$ natural numbers), then the sequence
$$
n^\ell,~ \binom{\ell}{1}n^{\ell-1},~\binom{\ell}{2}n^{\ell-2}, \ldots, ~\binom{\ell}{\ell-1}n,~1
$$
is non-increasing. In fact it is strictly decreasing, except that when $\ell=n$ the first two terms are equal.
\end{lemma}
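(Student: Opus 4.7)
The plan is to verify monotonicity term-by-term by comparing consecutive entries of the sequence. Write $T_j := \binom{k}{j} n^{k-j}$ for $0 \le j \le k-1$, so that the sequence in the lemma is $T_0, T_1, \ldots, T_{k-1}$. Using the standard identity $\binom{k}{j+1} = \binom{k}{j}\cdot \frac{k-j}{j+1}$, the inequality $T_j \ge T_{j+1}$ becomes, after dividing both sides by $\binom{k}{j} n^{k-j-1}$ (which is positive),
$$
n \;\ge\; \frac{k-j}{j+1}.
$$
This is the single inequality I need to establish for each $j \in \{0, 1, \ldots, k-2\}$, along with tracking when it is strict.

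Since $0 \le j$ we have $\frac{k-j}{j+1} \le k-j \le k$, and by hypothesis $k \le n$, so the desired inequality holds and the sequence is non-increasing. For strictness, note that equality $n = \frac{k-j}{j+1}$ forces $(j+1)n = k-j$, i.e., $j(n+1) = k - n$. In the case $k < n$ the right-hand side is negative while the left-hand side is non-negative, so equality is impossible and the sequence is strictly decreasing throughout. In the case $k=n$ the equation becomes $j(n+1) = 0$, whose only solution with $j \ge 0$ is $j = 0$; thus $T_0 = T_1$ while $T_j > T_{j+1}$ for all $j \ge 1$, matching the exceptional case in the statement.

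There is no real obstacle here: the entire argument reduces to the one-line ratio computation above, together with the trivial bound $\frac{k-j}{j+1} \le k$. The only subtlety is keeping careful track of the equality case, which I have isolated by solving the linear equation $j(n+1) = k-n$ explicitly.
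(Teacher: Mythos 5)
Your proof is correct: the ratio comparison $T_j/T_{j+1} = (j+1)n/(k-j)$ together with the bound $(k-j)/(j+1) \le k \le n$ is exactly the natural argument, and your isolation of the equality case via $j(n+1) = k-n$ is clean and complete. The paper states this lemma as an easy observation and omits the proof entirely, so there is nothing to compare against; your argument is the expected one.
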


\medskip

Lemma \ref{lem-obsv-dec} gives an alternate justification that the procedure described in the proof of Theorem \ref{thm-ind-set-main} (the construction of $G_m$) is valid, which we now briefly describe, as it is relevant to the construction of the $H_k$. Recall that $G_m = \oplus_{k=0}^{m-1} G_k$ where $G_k = \oplus_{j=1}^{a^{(m)}_k} (m-k)K_m$ (the mutual join of $a^{(m)}_k$ copies of $(m-k)K_m$), where $a^{(m)}_k$ is as given by (\ref{recurrence}). The sequence $(i_m(G_0), i_{m-1}(G_0),\ldots,i_1(G))$ (which we will denote compactly by $(i_k(G_0))_{k=m}^1$) is $(\binom{m}{m-k}m^k)_{k=m}^1$ (recall $a^{(m)}_0=1$). This starts $(m^m, \ldots)$, is decreasing (by Lemma \ref{lem-obsv-dec}, with $(n,\ell)=(m,m)$), and its successive terms are integer multiples of $m^m, m^{m-1}, m^{m-2}, \ldots$. 

Now consider the sequence $(m^m-i_k(G_0))_{k=m-1}^1$, which represents the shortfall of the sequence $(i_k(G_0))_{k=m}^1$ from the goal sequence $(m^m)_{k=m}^1$ (in the shortfall, we have omitted the leading $0$, corresponding to $k=m$). This sequence is increasing, and its successive terms are integer multiples of $m^{m-1}, m^{m-2}, \ldots$. Its first term is $m^m-\binom{m}{1}m^{m-1}$, which by Lemma \ref{lem-obsv-dec} is a non-negative multiple of $m^{m-1}$ (and in fact by (\ref{recurrence}) is $a^{(m)}_1 m^{m-1}$). So, to $G_0$ we join the graph $G_1$, the mutual join of $a^{(m)}_1$ copies of $(m-1)K_m$. (It happens that $a^{(m)}_1=0$, but for the purposes of this discussion, all that matters is that it is non-negative). 

The sequence $(i_k(G_1))_{k=m-1}^1$ is $(a^{(m)}_1\binom{m-1}{(m-1)-k}m^k)_{k=m-1}^1$. By Lemma \ref{lem-obsv-dec}, with $(n,\ell)=(m,m-1)$, this is decreasing, and its successive terms are integer multiples of $m^{m-1}, m^{m-2}, \ldots$. It follows that the sequence $(m^m-i_k(G_0\oplus G_1))_{k=m-2}^1$ ---  representing the shortfall of the sequence $(i_k(G_0\oplus G_1))_{k=m}^1$ from the goal sequence $(m^m)_{k=m}^1$ (in the shortfall, we have now omitted the two leading $0$'s, corresponding to $k=m$ and $m-1$) --- is increasing, and its successive terms are integer multiples of $m^{m-2}, m^{m-3}, \ldots$. Its first term is $m^m-\binom{m}{2}m^{m-2}-a^{(m)}_1\binom{m-1}{1}m^{m-2}$, which by Lemma \ref{lem-obsv-dec} is a non-negative multiple of $m^{m-2}$ (and in fact by (\ref{recurrence}) is $a^{(m)}_2 m^{m-2}$). 

So, to $G_0\oplus G_1$ we join the graph $G_2$, the mutual join of $a^{(m)}_2$ copies of $(m-2)K_m$, which brings the number of independent sets of size $m-2$ up to the desired $m^m$, and leaves a shortfall sequence that is non-negative and (by an appropriate application of Lemma \ref{lem-obsv-dec}) increasing, with terms that are successively integer multiples of $m^{m-3}, m^{m-4}, \ldots$. This construction can be iteratively continued until $G_m$ is reached. 

We modify this process slightly to obtain $H_k$.

\medskip

\noindent {\bf Case 1}, $k=1$: Set $H_1=G_m \oplus K_{m^{m-1}}$. Note that this requires neither Lemma \ref{lem-a_k-large} nor Lemma \ref{lem-obsv-dec}.

\medskip

\noindent {\bf Case 2}, $k \neq m, 1$: 
At the moment when the number of independent sets of size $k$ has reached $m^m$, there are $m^m$ independent sets of all sizes at least $k$, while the sequence $(i_{k-1}(G), \ldots, i_1(G))$ (where $G$ is the graph constructed so far) is strictly decreasing, with $i_{k-1}(G)=m^m - a_{m-(k-1)}m^{k-1} \leq m^m-m^{m-1}$ (the equality coming from the proof of Theorem \ref{thm-ind-set-main}, Part 1, and the inequality using Lemma \ref{lem-a_k-large}), and with $i_j(G)$ a multiple of $m^j$.

Successively join $m^{m-k-1}$ copies of $kK_m$ to $G$. This brings the number of independent sets of size $k$ up to $m^m + m^{m-1}$, and it adds
$$
km^{k-1}m^{m-k-1} \leq m^{m-1}
$$
independent sets of size $k-1$. The result is a graph $G'$ with $i_m(G')=\cdots=i_{k+1}(G')=m^m$, $i_k(G')=m^m+m^{m-1}$, with $(i_{k-1}(G'), \ldots, i_1(G'))$ strictly decreasing, with $i_{k-1}(G') \leq (m^m-m^{m-1})+m^{m-1} = m^m$, and with $i_j(G)$ a multiple of $m^j$. The iterative procedure described above (for the construction of $G_m$) can now be continued to obtain $H_k$.

\medskip

\noindent {\bf Case 3}, $k=m$: Instead of starting the construction with $mK_m$, we start with $K_{2m} \cup (m-1)K_m$. This has $2m^m$ independent sets of size $m$, and for $1 \leq k \leq m-1$ it has
$$
\binom{m-1}{(m-1)-k}m^k + 2m\binom{m-1}{m-k}m^{k-1}
$$
independent sets of size $k$ (first consider those without a vertex from the $K_{2m}$, and then those with such a vertex). 

Now consider the sequence $(i_k(K_{2m} \cup (m-1)K_m)_{k=m-1}^1$. The successive terms are integer multiples of $m^{m-1}, m^{m-2}, \ldots$, and the first term is 
$$
m^{m-1}+2m(m-1)m^{m-2} = 2m^m-m^{m-1}.
$$ 
By applying Lemma \ref{lem-obsv-dec} (with $(n,\ell)=(m,m-1)$) to the sequence $(\binom{m-1}{(m-1)-k}m^k)_{k=m-1}^1$, and again (still with $(n,\ell)=(m,m-1)$) to the sequence $(\binom{m-1}{m-k}m^{k-1})_{k=m-1}^1$, we get further that the sequence $(i_k(K_{2m} \cup (m-1)K_m)_{k=m-1}^1$ is strictly decreasing. The iterative procedure described above can now be implemented to obtain $H_m$.

\section{Matching permutations} \label{sec-matperm}

We begin by observing quickly that not all $2^{n-1}$ unimodal permutations of $\{1,\ldots,n\}$ are realizable as the permutation associated to a graph with largest matching $n$. Indeed, the following lemma shows that $m_1(G)$ cannot be the largest entry of a matching sequence of any graph whose largest matching has size at least $4$, so that for $n \geq 4$ the permutation $n(n-1)\cdots321$ is not realizable. (Recall that all graphs under consideration are simple.)
\begin{lemma} \label{lem-2beats1}
If $\nu(G)\geq 4$ then $m_2(G)>m_1(G)$.
\end{lemma}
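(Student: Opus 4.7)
The plan is to reduce the inequality to a counting identity and then exploit a maximum matching to lower bound the relevant quantity. Specifically, for each edge $e$ of $G$, let $N(e)$ be the number of edges of $G$ that are vertex-disjoint from $e$. Every $2$-matching $\{e,f\}$ is counted once when we examine $e$ and once when we examine $f$, so
\[
2\, m_2(G) \;=\; \sum_{e \in E(G)} N(e).
\]
Thus proving $m_2(G) > m_1(G)$ amounts to showing $\sum_{e} N(e) > 2|E(G)|$, i.e., that $N(e)$ averages strictly more than $2$ over the edges.

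The key step is to bound $N(e)$ below by counting only edges from a fixed maximum matching. Let $M$ be a maximum matching with $|M| = \nu(G) \geq 4$. I would split into two cases. If $e \in M$, then the other $|M|-1 \geq 3$ edges of $M$ are vertex-disjoint from $e$ by definition of a matching, giving $N(e) \geq 3$. If $e \notin M$, then by maximality of $M$ the edge $e$ must share a vertex with some edge of $M$ (otherwise $M \cup \{e\}$ would be a larger matching); since $e$ has only two endpoints and each endpoint lies in at most one edge of $M$, at most two edges of $M$ meet $e$, leaving at least $|M|-2 \geq 2$ edges of $M$ disjoint from $e$. Thus $N(e) \geq 2$ in all cases.

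Summing these bounds over $E(G)$ yields
\[
2\, m_2(G) \;\geq\; 3|M| + 2\bigl(m_1(G) - |M|\bigr) \;=\; |M| + 2\, m_1(G) \;\geq\; 4 + 2\, m_1(G),
\]
so $m_2(G) \geq m_1(G) + 2 > m_1(G)$, which is the desired conclusion (in fact a slightly stronger one).

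There is no real obstacle; the proof is a short double-counting argument, and the hypothesis $\nu(G) \geq 4$ enters precisely in making both lower bounds on $N(e)$ strong enough to push the average above $2$. The small number $3$ in the case analysis shows the bound is sharp in spirit: for $\nu(G) = 3$ and $G = 3K_2$ one has $m_1 = m_2 = 3$, matching the threshold nature of the lemma.
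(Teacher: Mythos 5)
Your proof is correct, and it takes a genuinely different route from the paper. The paper proves Lemma~\ref{lem-2beats1} by induction on the number of edges $e(G)$, using the matching-polynomial deletion recursion $m_2(G)=m_2(G-uv)+m_1(G-u-v)$ together with $m_1(G)=m_1(G-uv)+1$, and bounding $m_1(G-u-v)\geq 2$ from the existence of a $4$-matching. Your argument instead double-counts ordered pairs of disjoint edges, writing $2m_2(G)=\sum_e N(e)$, and lower-bounds each $N(e)$ by comparing $e$ against a fixed maximum matching $M$: edges in $M$ see at least $|M|-1\geq 3$ disjoint partners in $M$, and edges outside $M$ see at least $|M|-2\geq 2$ (the observation that $e$ must meet some edge of $M$ is true but not actually used). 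This yields the slightly stronger quantitative conclusion $m_2(G)\geq m_1(G)+2$, with $4K_2$ showing tightness, and it avoids induction entirely. The tradeoff is that the paper's deletion-recursion scheme is the one that scales: the same recursion, applied in a double induction, drives the proof of the general Theorem~\ref{thm-main-res} ($m_k<m_\ell$ for $k<\ell<n-k$), whereas the direct double-count does not extend in any obvious way once $k\geq 2$.
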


\begin{proof} 
We proceed by induction on $e(G)$, the number of edges of $G$. In the base case, $e(G)=4$, $G$ must consist of four vertex disjoint edges, and we have $m_2(G)=6>4=m_1(G)$. For the induction step, let $G$ be a graph on more than four edges with $\nu(G)\geq 4$, and let $uv$ be an edge in $G$ (joining vertices $u$ and $v$) chosen so that $G_1$, the graph obtained from $G$ by deleting the edge $uv$, still has a matching with at least four edges. Let $G_2$ be obtained from $G$ by deleting the vertices $u$ and $v$. We have $m_2(G)=m_2(G_1)+m_1(G_2)$ (the set of matchings of size $2$ in $G$ partitions into those that do not include $uv$ --- $m_2(G_1)$ many --- and those that do --- $m_1(G_2)$ many). Also, $m_1(G)=m_1(G_1)+1$. Now by induction $m_2(G_1)>m_1(G_1)$, and also $m_1(G_2)\geq 2>1$, because on deleting $u$ and $v$ from $G$ at least two of the edges of any matching of size $4$ remain. Combining we get $m_2(G)=m_2(G_1)+m_1(G_2) > m_1(G_1)+1=m_1(G)$.
\end{proof}

\medskip

We make an incidental observation at this point. The matching polynomial of a graph with maximum matching size $n$ can be expressed in the form $(1+r_1x)(1+r_2x)\cdots (1+r_nx)$ where the $r_i$'s are real and non-negative; this is a consequence of a theorem of Heilmann and Lieb \cite{HL}. To a sequence that arises as the coefficient sequence of a polynomial of the form $(1+r_1x)(1+r_2x)\cdots (1+r_nx)$ with $r_i$ real and non-negative, we can associate permutations via (\ref{perm-assoc}). Because real-rooted polynomials have unimodal coefficient sequences, at most only the $2^{n-1}$ unimodal permutations of $[n]$ can arise in this context. The permutation $n(n-1)(n-2)\cdots 321$ can arise: let all $r_i$ be equal, say equal to $r$, so the polynomial becomes
$$
1 + \binom{n}{1}rx + \binom{n}{2}r^2x^2 + \cdots + \binom{n}{n-1}r^{n-1}x^{n-1} + r^nx^n.
$$
It's easy to check that if $r$ is sufficiently small, 
$$
r^n < r^{n-1} \binom{n}{n-1} < \cdots < \binom{n}{2}r^2 < \binom{n}{1}r 
$$
so that this polynomial has associated with it the unique permutation $n(n-1)(n-2)\cdots 321$. This shows that our observations about restrictions on the matching sequence are not just restrictions coming in disguise from the real-rooted property of the matching polynomial.

\medskip

The proof of Lemma \ref{lem-2beats1} generalizes considerably. We state and prove the generalization first, and then consider the consequences for matching permutations, in particular giving the proof of Theorem \ref{thm-ub-on-matching}.

\begin{thm} \label{thm-main-res}
For each $n \geq 4$, and for each $k=1,\ldots, \lfloor n/2 \rfloor - 1$, if $\nu(G)\geq n$ then $m_k(G) < m_\ell(G)$ for each $\ell$ satisfying $k < \ell < n-k$.
\end{thm}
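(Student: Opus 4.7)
The plan is to prove the theorem by strong induction on $e(G)$, running the induction hypothesis simultaneously over all valid triples $(n,k,\ell)$. First I dispatch the case $k=0$ directly: a fixed maximum matching of size $n$ contributes at least $\binom{n}{\ell}\geq n\geq 2$ matchings of size $\ell$, so $m_\ell(G)\geq 2 > 1 = m_0(G)$ whenever $1\leq\ell\leq n-1$ (and the range $k \leq \lfloor n/2\rfloor - 1$ being non-empty forces $n\geq 2$). From here on I take $k\geq 1$.

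The inductive argument splits on the structure of $G$. If no two edges of $G$ share a vertex, then $G$ is a matching on $m:=\nu(G)\geq n$ edges (plus isolated vertices), so $m_j(G)=\binom{m}{j}$, and $\binom{m}{k}<\binom{m}{\ell}$ follows from the usual unimodality of binomial coefficients together with the bound $k<\ell<n-k\leq m-k$. Otherwise some vertex of $G$ meets two edges; since no matching contains both, at least one of them --- call it $uv$ --- is absent from some maximum matching, so $\nu(G-uv)=\nu(G)\geq n$.

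With such an edge $uv$ in hand I apply the standard identity
$$
m_j(G) = m_j(G-uv) + m_{j-1}(G-u-v)
$$
at $j=k$ and $j=\ell$, and compare the two summands separately. For the first summands, the induction hypothesis at the unchanged triple $(n,k,\ell)$ applied to $G-uv$ (which has fewer edges and still has $\nu\geq n$) gives $m_k(G-uv)<m_\ell(G-uv)$. For the second summands, I apply the induction hypothesis to $G-u-v$ at the reduced triple $(n',k',\ell')=(n-2,\,k-1,\,\ell-1)$, using that removing two vertices destroys at most two edges of any matching, whence $\nu(G-u-v)\geq n-2$. The three validity conditions $k'<\ell'$, $\ell'<n'-k'$ and $k'\leq\lfloor n'/2\rfloor -1$ translate back precisely to $k<\ell$, $\ell<n-k$ and $k\leq\lfloor n/2\rfloor -1$, each of which we are given. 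When $k-1=0$ the required bound $m_0(G-u-v)<m_{\ell-1}(G-u-v)$ is supplied by the directly-handled $k=0$ case. Adding the two strict inequalities yields $m_k(G)<m_\ell(G)$.

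The main (minor) obstacle is choosing the right reduced parameters for $G-u-v$: one needs a triple which both lies in the theorem's range of validity \emph{and} respects the available lower bound on $\nu(G-u-v)$. The shift $(n,k,\ell)\mapsto(n-2,\,k-1,\,\ell-1)$ is essentially forced by the fact that each original hypothesis transforms into the analogous hypothesis at the shifted parameters, so the induction closes cleanly. Everything else (the structural dichotomy, the edge deletion identity, the $k=0$ base case) is routine.
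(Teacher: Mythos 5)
Your proof is correct and follows essentially the same route as the paper: the edge-deletion recurrence $m_j(G) = m_j(G-uv) + m_{j-1}(G-u-v)$, induction on edge count, and the parameter shift $(n,k,\ell)\mapsto(n-2,k-1,\ell-1)$ for the vertex-deleted piece. You package this as a single strong induction on $e(G)$ quantified over all admissible triples, while the paper nests an outer induction on $n$ (with Lemma~\ref{lem-2beats1} supplying the $n=4$, $k=1$ anchor) around an inner induction on $e(G)$; these framings are interchangeable. One detail where you are more careful than the paper: you select $uv$ to be an edge lying outside some maximum matching, which guarantees $\nu(G-uv)\geq n$ and hence that the induction hypothesis really does apply to $G-uv$. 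The paper instead deletes an \emph{arbitrary} edge, which does not literally justify that step, since an arbitrary edge can lie in every maximum matching (for instance one of the two pendant edges of a path on four vertices, padded by enough disjoint edges to make $\nu(G)=n$), in which case $\nu(G-uv)=n-1$ and the inner induction hypothesis has nothing to say; your explicit choice cleanly repairs this small imprecision.
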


\begin{proof} 
We proceed by a double induction, with an outer induction on $n$, and an inner induction on $e(G)$, the number of edges of $G$. The base case of the outer induction, $n=4$, is the assertion that if $\nu(G)\geq 4$ then $m_1(G) < m_2(G)$, which is exactly Lemma \ref{lem-2beats1}.

For $n>4$, assume that we already have the result for all $4 \leq n' < n$. Fix $k$, $1 \leq k \leq \lfloor n/2 \rfloor -1$. We will prove, by induction on $e(G)$, that if $\nu(G) \geq n$ then $m_k(G)<m_\ell(G)$ for any $\ell$ strictly between $k$ and $n-k$. 

In the base case ($e(G)=n$) $G$ must consist of $n$ vertex disjoint edges, and we have $m_\ell(G)=\binom{n}{\ell}>\binom{n}{k}=m_k(G)$. 

For the induction step in this inner induction, let $G$ be a graph on more than $n$ edges, with $\nu(G)\geq n$, and let $uv$ be an edge in $G$, joining vertices $u$ and $v$, chosen so that $G_1$, the graph obtained from $G$ by deleting the edge $uv$, has a matching of size at least $n$. As in the proof of Lemma \ref{lem-2beats1}, let also $G_2$ be obtained from $G$ by deleting the vertices $u$ and $v$. We have
\begin{equation} \label{e0}
m_\ell(G)=m_\ell(G_1)+m_{\ell-1}(G_2)~~\mbox{and}~~m_k(G)=m_k(G_1)+m_{k-1}(G_2).
\end{equation}
Now by the induction hypothesis on $e(G)$, we have
\begin{equation} \label{e1}
m_\ell(G_1) > m_k(G_1).
\end{equation} 
But also, we claim that
\begin{equation} \label{e2}
m_{\ell-1}(G_2) > m_{k-1}(G_2).
\end{equation} 
If $n=5$ then $k=1$ and either $\ell=2$ or $\ell=3$, and (\ref{e2}) becomes either $m_1(G_2) > 1$ (in the case $\ell=1$; note that $m_0(G_2)=1$) or $m_2(G_2) > 1$ (in the case $\ell=2$); both of these hold since $G_2$ has at least three vertex-disjoint edges. For $n > 5$ (\ref{e2}) follows from the $n-2$ case of the of the outer induction. Indeed, $\nu(G_2)\geq n-2$ (removing $u, v$ can delete at most two of the edges from any matching of size $n$). Set $n'=n-2$,  $k'=k-1$ and $\ell'=\ell-1$. We have $1 \leq k \leq \lfloor n/2\rfloor -1$ and $k < \ell < n-k$, so
$0 \leq k-1 \leq \lfloor n/2\rfloor -2$ and $k-1 < \ell-1 < n-k-1$, or 
$0 \leq k' \leq \lfloor n'/2\rfloor -1$ and $k' < \ell' < n'-k'$,
and so the appeal to the earlier case of the outer induction is valid.

Combining (\ref{e1}) and (\ref{e2}) with (\ref{e0}) yields $m_\ell(G) > m_k(G)$, as required.
\end{proof}

\medskip

An immediate consequence of Theorem \ref{thm-main-res} is that for any graph $G$ with $\nu(G)\geq n$ we have $m_{\lfloor n/2\rfloor -1}(G) < m_{\lfloor n/2\rfloor}(G)$, which says that the mode of the matching sequence must occur at $\lfloor n/2\rfloor$ or later. This means that $M_n$, the number of permutations of $[n]$ that can arise as the permutation associated with a graph with largest matching having size $n$, satisfies 
$M_n \leq \sum_{k=\lfloor n/2\rfloor -1}^{n-1} \binom{n-1}{k}$.  
This is asymptotically $2^{n-2}$ as $n$ goes to infinity; a factor of $2$ smaller than the upper bound observed in \cite{AMSE}.

A finer analysis of Theorem \ref{thm-main-res} yields the substantially smaller bound (\ref{eq-up-on-count}) on $M_n$. Let $(m_1, \ldots, m_n)$ be a matching sequence, with mode $m_t$ (perhaps obtained after breaking a tie). Any associated permutation (in one-line notation) puts $\{1, \ldots, t-1\}$ in increasing order and $\{t+1, \ldots, n\}$ in decreasing order in the first $n-1$ spots, and puts $t$ at the end.

This permutation can be encoded by an U-D sequence of length $n-1$ --- each time one sees a U, one enters the first as-yet-unused number from $\{1,\ldots, t-1\}$ (remembering that these numbers should be used in  increasing order); each time one sees a D, one enters the first as-yet-unused number from $\{t+1,\ldots, n\}$ (remembering that these numbers should be used in decreasing order). For example, 
$$
UUDDDUUDUDDUU
$$
would correspond to $n=14, t=8$, and would yield the permutation
$$
1~2~14~13~12~3~4~11~5~10~9~6~7~8.
$$ 
Notice that this is a bijective encoding --- a unique permutation can be read from a sequence. Notice also that in the $U$-$D$ sequence one is never allowed to have an initial substring that has three more $D$'s than $U$'s, because the first time we see such an initial string, say after $j$ $U$'s and $(j+3)$ $D$'s, we would have seen $1$ through $j$, but not $j+1$, and we would have seen $n$ through $n-(j+2)$, in particular including $n-(j+2)$, so we would have
$m_{j+1}>m_{n-(j+2)}$, violating Theorem \ref{thm-main-res}. It follows that $M_n$ is bounded above by the number of $U$-$D$ sequences of length $n-1$ having no initial substring with three more $D$'s than $U$'s. We denote this number by $C^{(3)}_n$. The sequence $(C^{(3)}_n)_{n\geq 1}$ begins $(1, 2, 4, 7, 14, 25, 50, \ldots)$, and is \cite[A026010]{Sloane}.

Rather than deriving an exact formula for $C^{(3)}_n$ (one appears at \cite[A026010]{Sloane}), we take a simpler approach. The quantity $C^{(3)}_n$ is bounded above by the number of $U$-$D$ sequences of length $n+1$ that start with $UU$ and have no initial substring with more $D$'s than $U$'s. This in turn is upper bounded by the number of $U$-$D$ sequences of length $n+1$ having no initial substring with more $D$'s than $U$'s (with no restriction on how the strings start). These sequences are also known as {\em left factors} of Dyck words, and it is well-known (see, for example, \cite[A001405]{Sloane} or \cite[Proposition 1.6]{J-V}) that there are $\binom{n+1}{\lfloor (n+1)/2 \rfloor}$ such. By Stirling's approximation to the factorial, this is asymptotically $c2^n/\sqrt{n}$ (the constant $c$ depending on the parity of $n$). This verifies (\ref{eq-up-on-count}) and completes the proof of Theorem \ref{thm-ub-on-matching}.

\section{Questions and problems} \label{sec-questions}

A number of interesting problems remain concerning the behavior of the independent set sequence of a graph. We begin with the natural refinement of our determination of $f(m)$.
\begin{prob}
For each permutation $\pi$, determine $g(\pi)$, the  minimum order over all graphs $G$ for which $\pi$ is an independent set permutation of $G$.
\end{prob}

\medskip

We have shown that at most $m^m$ vertices is enough to induce the constant weak order on $[m]$ from an independent set sequence, but this is definitely not enough to realize all weak orders; for example, the weak order $m-1 < m < m-2 < m-3 < \cdots < 2 < 1$ requires at least $m^m + m - 2$ vertices. Indeed, if $G$ realizes this weak order, then $i_m(G) > i_{m-1}(G)$, and so, by (the contrapositive of) Theorem \ref{thm-ind-set-main} (Part 2), $i_m(G) \geq m^m$. But we also must have $i_1(G)>i_2(G)>\cdots i_{m-2}(G)>i_m(G)$, so $i_1(G) \geq m^m+m-2$, so $G$ must have at least $m^m+m-2$ vertices. In the other direction, we have shown that fewer than $m^{m+2}$ vertices are sufficient to induce any weak order on $m$.
\begin{prob}
Determine the smallest order large enough to realize every weak order on $[m]$ as the weak order induced by the independent set sequence of some graph.
\end{prob}
\begin{prob} \label{prob-AMSE}
Do the same for weak orders consisting of singleton blocks; equivalently, answer Problem \ref{AMSE-ind-Q} with the additional constraint that the permutations associated with independent set sequences are required to be unique.
\end{prob}
As discussed in the introduction, it is quite likely that the authors of \cite{AMSE} were thinking of Problem \ref{prob-AMSE} when they formulated Problem \ref{AMSE-ind-Q}.

\medskip

A fascinating question is raised in \cite{AMSE}, that has attracted some attention, but has remained mostly open. Although the independent set sequence of a graph is unconstrained, if we restrict to special classes of graphs, then it can become constrained. For example the independent set sequence of a claw-free graph is unimodal \cite{Hamidoune}, and so at most only the $2^{m-1}$ unimodal permutations of $[m]$ can arise as the independent set permutation of a claw-free graph with largest independent set size $m$. Alavi et al. observed that the independent set sequences of stars and paths are both unimodal, and asked:
\begin{question}
\cite[Problem 3]{AMSE} Is the independent set sequence of every tree unimodal?
\end{question}
It is for all trees on 24 or fewer vertices \cite{Radcliffe, YosefMizrachiKadrawi}. See, for example, \cite{GalvinHilyard} for recent work and other references.  

\medskip

It had been conjectured by Levit and Mandrescu  \cite{LM} that every bipartite graph has unimodal independent sequence, and they obtained a partial result: if $G$ is a bipartite graph with $\alpha(G)=m  \geq 1$, then the final third of the independent set sequence is weakly decreasing, i.e.,
$$
i_{\lceil (2m-1)/3 \rceil}(G) \geq \cdots \geq i_{m-1}(G) \geq i_m(G).
$$
The unimodality conjecture was, however, disproved by Bhattacharyya and Kahn \cite{BhattacharyyaKahn}.
\begin{prob} \label{bip-prob}
Characterize the permutations that can occur as the independent set permutations of a bipartite graph.  
\end{prob}
There is an interesting parallel to the case of well covered graphs. A graph is {\em well covered} if all its maximal independent sets have the same size. It had been conjectured by Brown, Dilcher, and Nowakowski \cite{BDN} that every well covered graph has unimodal independent sequence, but this was disproved by Michael and Traves \cite{MT}, who also showed that the first half of the independent set sequence of a well covered graph is increasing, i.e.,
$$
i_1(G) < i_2(G) < \cdots < i_{\lceil m/2 \rceil}(G).
$$
They formulated the {\em roller-coaster} conjecture, that for any $m \geq 1$ and any permutation $\pi$ of $[\lceil m/2 \rceil, m]$ there is a well covered graph $G$ with $\alpha(G)=m$ and with
$$
i_{\pi([\lceil m/2 \rceil)}(G) < i_{\pi([\lceil m/2 \rceil)+1}(G) < \cdots < i_{\pi(m)}(G). 
$$
This was subsequently proved by Cutler and Pebody \cite{CP}. The analog of the roller-coaster conjecture does not hold for Problem \ref{bip-prob}; for example, it is easy to see that for $n \geq 7$, any bipartite graph $G$ on $n$ vertices has $i_2(G) > i_1(G)$. 

\medskip

Turning to matching permutations, the incidental observation made after the proof of Lemma \ref{lem-2beats1} raises the following (perhaps easy) question.
\begin{question}
Which unimodal permutations of $[n]$ can arise via (\ref{perm-assoc}) from the coefficient sequence of a polynomial of the form $(1+r_1x)(1+r_2x)\cdots (1+r_nx)$ with $r_i$ real and non-negative?
\end{question}
 
 \medskip
 
Finally, the greater part of Problem \ref{AMSE-matching-q} remains open.
\begin{prob}
Characterize the permutations that can occur as the matching permutation of a graph,
and determine the growth rate of $M_n$, the number of permutations of $[n]$ that are matching permutations of some graph.
\end{prob}

\medskip

\noindent {\bf Acknowledgement}: We thank the referees for their careful reading and helpful suggestions on presentation.

\end{document}